\documentclass[10pt,reqno]{amsart} 

\usepackage{amsthm,amsfonts,amscd,amsmath}
\usepackage[hidelinks]{hyperref} 
\usepackage{url}
\usepackage{xcolor}
\hypersetup{
    colorlinks,
    linkcolor={red!50!black},
    citecolor={blue!50!black},
    urlcolor={blue!80!black}
}

\usepackage[normalem]{ulem}
\usepackage{graphicx} 
\usepackage{datetime} 
\usepackage{arydshln} 
\usepackage{cancel}
\usepackage{caption,subcaption}
\captionsetup{format=hang,labelfont={bf},textfont={small,it}} 
\numberwithin{figure}{section}
\numberwithin{table}{section}

\newcommand{\gkpSI}[2]{\ensuremath{\genfrac{\lbrack}{\rbrack}{0pt}{}{#1}{#2}}} 
\newcommand{\gkpSII}[2]{\ensuremath{\genfrac{\lbrace}{\rbrace}{0pt}{}{#1}{#2}}}

\newcommand{\cf}{\textit{cf.\ }} 
\newcommand{\Iverson}[1]{\ensuremath{\left[#1\right]_{\delta}}}

\DeclareMathOperator{\fg}{fg}
\DeclareMathOperator{\Div}{div}

\title[Factorization Theorems for Hadamard Products and Derivatives]{
       Factorization Theorems for Hadamard Products and 
       Higher-Order Derivatives of Lambert Series Generating Functions 
} 
\author[M. D. Schmidt]{Maxie D. Schmidt \\ 
         Georgia Institute of Technology \\ 
         School of Mathematics \\ 
         117 Skiles Building \\ 
         686 Cherry Street NW \\ 
         Atlanta, GA 30332 \\ 
         USA \\ 
         \href{mailto:maxieds@gmail.com}{maxieds@gmail.com} \\ 
         \href{mailto:mschmidt34@gatech.edu}{mschmidt34@gatech.edu} 
} 

\email{\href{mailto:maxieds@gmail.com}{maxieds@gmail.com} \\ 
       \href{mailto:mschmidt34@gatech.edu}{mschmidt34@gatech.edu} 
}
\address{Georgia Institute of Technology \\ 
         School of Mathematics \\ 
         117 Skiles Building \\ 
         686 Cherry Street NW \\ 
         Atlanta, GA 30332 \\ 
         USA
} 

\date{\today} 

\keywords{Lambert series; factorization theorem; matrix factorization; partition function; Hadamard product}
\subjclass[2010]{11A25; 11P81; 05A17; 05A19}

\allowdisplaybreaks 

\theoremstyle{plain} 
\newtheorem{theorem}{Theorem}

\newtheorem{prop}[theorem]{Proposition}
\newtheorem{lemma}[theorem]{Lemma}
\newtheorem{cor}[theorem]{Corollary}
\numberwithin{theorem}{section}

\theoremstyle{definition} 
\newtheorem{example}[theorem]{Example}
\newtheorem{remark}[theorem]{Remark}
\newtheorem{definition}[theorem]{Definition}

\begin{document} 

\begin{abstract} 
We first summarize joint work on several preliminary canonical Lambert series factorization 
theorems. Within this article we establish new analogs to these 
original factorization theorems which characterize two specific primary cases of the 
expansions of Lambert series generating functions: factorizations for 
Hadamard products of Lambert series and for higher-order derivatives of Lambert series. 
The series coefficients corresponding to these two generating function cases are 
important enough to require the special due attention we give to their expansions 
within the article, and moreover, are significant in that they connect the 
characteristic expansions of Lambert series over special multiplicative functions to the 
explicitly additive nature of the theory of partitions. 
Applications of our new results provide new exotic sums involving 
multiplicative functions, new summation-based interpretations of the coefficients of the 
integer-order $j^{th}$ derivatives of Lambert series generating functions, 
several new series for the Riemann zeta function, and an exact identity for the 
number of distinct primes dividing $n$. 
\end{abstract}

\maketitle

\section{Introduction} 
\label{Section_Intro}

\subsection{Lambert series factorization theorems} 

In the references we have proved several variants and generalized expansions of 
\emph{Lambert series factorization theorems} of the form 
\cite{MERCA-SCHMIDT3,MERCA-SCHMIDT2,MERCA-LSFACTTHM,SCHMIDT-LSFACTTHM} 
\begin{align*} 
\sum_{n \geq 1} \frac{a_n q^n}{1-q^n} & = 
     \frac{1}{(q; q)_{\infty}} \sum_{n \geq 1} \sum_{k=1}^n s_{n,k} a_k \cdot q^n, 
\end{align*} 
and of the form 
\begin{align*}
\sum_{n \geq 1} \frac{\bar{a}_n q^n}{1-q^n} & = 
     \frac{1}{(q; q)_{\infty}} \sum_{n \geq 1} \sum_{k=1}^n \widetilde{s}_{n,k} \widetilde{a}_k \cdot q^n, 
\end{align*} 
for $\widehat{a}_n$ and $\bar{a}_n$ 
depending on an arbitrary arithmetic function $a_n$ and where the lower-triangular sequence 
$s_{n,k} := [q^n] (q; q)_{\infty} q^k / (1-q^k)$, which we typically require to be independent of the $a_n$, 
is the difference of two partition 
functions counting the number of $k$'s in their respective odd (even) distinct partitions. 

In the concluding remarks to \cite{MERCA-SCHMIDT3} we gave several specific examples of 
other constructions of related Lambert series factorization theorems. 
In the reference, we also proved a few new properties of the factorizations of 
Lambert series generating functions over the convolution of two arithmetic functions, 
$f \ast g$ expanded by 
\begin{align*} 
\sum_{n \geq 1} \frac{(f \ast g)(n) q^n}{1-q^n} & = \frac{1}{(q; q)_{\infty}} 
     \sum_{n \geq 1} \sum_{k=1}^n 
     \widetilde{s}_{n,k}(g) f(k) \cdot q^n, 
\end{align*} 
which we cite in this article. 
The Lambert series factorizations of these two forms and their variations, two to three 
of which we consider in this article, also imply matrix factorizations of these 
expansions which are dictated by the corresponding typically invertible matrix of 
$s_{n,k}$ or $\widetilde{s}_{n,k}(g)$. The matrix-based interpretation of these factorization 
theorems is perhaps the most intuitive way to explore how these expansions ``\emph{factor}'' into 
distinct matrices applied to vectors of special sequences. 

\subsection{Significance of our results} 

Our new results are rare and important because they provide a mechanism that 
effectively translates the divisor sums of the coefficients of a Lambert series generating function 
into ordinary sums which similarly generate the same prescribed arithmetic function, say $a_n$. 
Moreover, these factorization theorems connect the special functions in multiplicative number theory 
which are typically tied to a particular Lambert series expansion with the additive theory of 
partitions and partition functions in unusual and unexpected new ways. 
We are one of the first authors to examine such relations between the additive and multiplicative 
in detail (see also \cite{MERCA-SCHMIDT-AMM,MERCA-SCHMIDT3,MERCA-SCHMIDT2}). 
We note that we are not the first to consider the derivatives of Lambert series generating functions 
\cite{SCHMIDT-BDDDIVSUMS}, though 
our perspective on the connections afforded by these factorizations is distinctly new. 

\subsection{Focus within this article} 

Within this article we explore the expansions of factorization theorems for two primary 
additional special case 
variants which are distinctive and important enough in their applications to require special attention here: 
Hadamard products of two Lambert series generating functions and the higher-order integer derivatives 
of Lambert series generating functions. Section \ref{Section_HP} proves several new properties of the 
first case, where the results proved in Section \ref{Section_LSDerivs} consider the second case in detail. 
The significance of these two particular factorizations is that they have a broad range of applications to 
expanding special and classical arithmetic functions from multiplicative number theory. 
In Section \ref{Section_OtherFactThms} we tie up loose ends by offering two other related 
variants of the Lambert series factorizations. Namely, we prove factorization theorems for 
generating function convolutions and provide a purely matrix-based proof of a new formula 
for the coefficients enumerated by a Lambert series generating function. 

\subsubsection*{New results and characterizations} 

The Hadamard product generating function cases lead to several forms of 
new so-termed ``\emph{exotic}'' sums for classical special functions as illustrated in the explicit 
corollaries given in Example \ref{example_HPThm_consequences} of the next section. 
For example, if we form the Hadamard product of generating functions of the two 
Lambert series over Euler's totient function, $\phi(n)$, we obtain the following more 
exotic-looking sum for our multiplicative function of interest: 
\[
\phi(n) = \sum_{k=1}^n \sum_{d|n} \frac{p(d-k)}{d} \mu(n/d)\left[k^2 + 
     \sum_{b=\pm 1} \sum_{j=1}^{\left\lfloor \frac{\sqrt{24k-23}-b}{6} \right\rfloor} (-1)^j 
     \left(k-\frac{j(3j+b)}{2}\right)^2\right]. 
\]
The importance of 
obtaining new formulas and identities for the derivatives of a series whose coefficients we are 
interested in studying should be obvious, though our factorization results also provide new alternate 
characterizations of these derivatives apart from typical ODEs which we can form involving the 
derivatives of any sequence ordinary generating functions. 
Our results for the factorization theorems for derivatives of Lambert series generating functions 
in Section \ref{Section_LSDerivs} provides two particular 
factorizations which characterize these expansions. 

\section{Factorization Theorems for Hadamard Products} 
\label{Section_HP}

\subsection{Hadamard products of generating functions} 

The \emph{Hadamard product} of two ordinary generating functions $F(q)$ and $G(q)$, respectively 
enumerating the sequences of $\{f_n\}_{n \geq 0}$ and $\{g_n\}_{n \geq 0}$ is defined by 
\begin{align*}
(F \circ G)(q) & := \sum_{n \geq 0} f_n g_n \cdot q^n,\ \text{ for $|q| < \sigma_F \sigma_G$, } 
\end{align*} 
where $\sigma_F$ and $\sigma_G$ denote the radii of convergence of each respective generating function. 
Analytically, we have an integral formula and corresponding coefficient extraction formula 
for the Hadamard product of two generating functions when $F(q)$ is expandable in a fractional series 
respectively given by \cite[\S 1.12(V); Ex.\ 1.30, p.\ 85]{ADVCOMB} \cite[\S 6.3]{ECV2} 
\begin{align*} 
(F \circ G)(q^2) & = \frac{1}{2\pi} \int_0^{\pi} F(q e^{\imath t}) G(q e^{-\imath t}) dt \\ 
(F \circ G)(q) & = [x^0] F\left(\frac{q}{x}\right) G(x). 
\end{align*} 
In the context of the factorization theorems we consider in this article and in the references, 
we consider the Hadamard products of two Lambert series generating functions for special 
arithmetic functions $f_n$ and $g_n$ which we define coefficient-wise to enumerate the product of the 
divisor sums over each sequence corresponding to the coefficients of the individual Lambert series 
over the two functions. 
This subtlety is discussed shortly in 
Definition \ref{def_HP_for_LSGFs}. 
As we prove below, it turns out that we can formulate analogous factorization theorems for the cases 
of these Hadamard products as well. 
The next definition makes the expansion of the particular Hadamard product functions we consider in 
this section more precise. 

\begin{definition}[Hadamard Products for Lambert Series Generating Functions] 
\label{def_HP_for_LSGFs}
For any fixed arithmetic functions $f$ and $g$, we define the Hadamard product of the 
two Lambert series over $f$ and $g$ to be the auxiliary Lambert series generating function 
over the composite function $a_{\fg}(n)$ whose coefficients are given by 
\[
\sum_{d|n} a_{\fg}(d) = [q^n] \sum_{m \geq 1} \frac{a_{\fg}(m) q^m}{1-q^m} := 
     \underset{:=\fg(n)}{\underbrace{\left(\sum_{d|n} f_d\right)\left(\sum_{d|n} g_d\right)}}, 
\] 
so that by M\"obius inversion we have that 
\[
a_{\fg}(n) = \sum_{d|n} \fg(d) \mu(n/d) = (\fg \ast \mu)(n). 
\] 
We note that this definition of our Hadamard product generating functions is already 
slightly different than the 
one given above in that we define the Hadamard product of two Lambert series generating functions 
by the expansion of a third composite Lambert series which corresponds to the 
particular expansion of the factorization in \eqref{eqn_HPFactGenExp} below. 
\end{definition} 

\subsection{Main results and applications} 

The next theorems in this section define the key matrix sequences, 
$s_{n,k}(f)$ and $s_{n,k}^{(-1)}(f)$, in terms of the 
next factorization of the Lambert series over $a_{\fg}(n)$ from the definition above in the form of 
\begin{align} 
\label{eqn_HPFactGenExp} 
\sum_{n \geq 1} \frac{a_{\fg}(n) q^n}{1-q^n} & = \frac{1}{(q; q)_{\infty}} \sum_{n \geq 1} 
     \sum_{k=1}^n s_{n,k}(f) g_k \cdot q^n, 
\end{align} 
where $s_{n,k}(f)$ is independent of the function $g$, 
which is equivalent to defining the factorization expansion by the inverse matrix sequences as 
\begin{align} 
\label{eqn_HPFactGenExp_v2} 
g_n & = \sum_{k=1}^n s_{n,k}^{(-1)} \cdot 
     [q^k] \left((q; q)_{\infty} \times \sum_{n \geq 1} \frac{a_{\fg}(n) q^n}{1-q^n}\right). 
\end{align} 
We also define the following function to expand divisor sums over arithmetic functions as 
ordinary sums for any integers $1 \leq k \leq n$: 
\[
T_{\Div}(n, k) := 
     \begin{cases} 
     1, & \text{ if $k|n$; } \\ 
     0, & \text{ otherwise. }
     \end{cases} 
\] 

\begin{theorem} 
For all integers $1 \leq k \leq n$, we have the following definition of the factorization matrix 
sequence defining the expansion on the right-hand-side of \eqref{eqn_HPFactGenExp} where 
we adopt the notation $\widetilde{f}(n) := \sum_{d|n} f_d$: 
\begin{align*} 
 & s_{n,k}(f) = T_{\Div}(n, k) \widetilde{f}(n) \\ 
 & \phantom{=\quad\ } + \sum_{b=\pm 1} 
     \sum_{j=1}^{\left\lfloor \frac{\sqrt{24(n-k)+1}-b}{6} \right\rfloor} 
     (-1)^j T_{\Div}\left(n-\frac{j(3j+b)}{2}, k\right) \cdot 
     \widetilde{f}\left(n-\frac{j(3j+b)}{2}\right).
\end{align*} 
\end{theorem}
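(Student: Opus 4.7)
\medskip

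\noindent\textbf{Proof proposal.} The plan is to compute the coefficient
$[q^n]\bigl((q;q)_\infty \sum_{m \geq 1} a_{\fg}(m) q^m/(1-q^m)\bigr)$
directly in two ways, and then equate the result to $\sum_{k=1}^n s_{n,k}(f) g_k$
from the defining factorization \eqref{eqn_HPFactGenExp}. First I would use the
defining identity $\sum_{d \mid m} a_{\fg}(d) = \widetilde{f}(m)\widetilde{g}(m)$
from Definition \ref{def_HP_for_LSGFs} to rewrite the Lambert series simply as
\[
\sum_{m \geq 1} \frac{a_{\fg}(m) q^m}{1-q^m}
   = \sum_{m \geq 1} \widetilde{f}(m)\widetilde{g}(m) \, q^m,
\]
which eliminates the geometric denominators and reduces the problem to a
Cauchy product with the infinite $q$-Pochhammer symbol.

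Next, I would invoke Euler's pentagonal number theorem in the form
\[
(q;q)_\infty
   = 1 + \sum_{b = \pm 1} \sum_{j \geq 1} (-1)^j q^{j(3j+b)/2},
\]
so that extracting $[q^n]$ from the product gives
\[
[q^n]\Bigl((q;q)_\infty \sum_{m \geq 1} \widetilde{f}(m)\widetilde{g}(m) q^m\Bigr)
   = \widetilde{f}(n)\widetilde{g}(n)
   + \sum_{b=\pm 1} \sum_{j \geq 1} (-1)^j \widetilde{f}(n_{j,b}) \widetilde{g}(n_{j,b}),
\]
where $n_{j,b} := n - j(3j+b)/2$ and only those $j$ with $n_{j,b} \geq 1$ contribute.
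Then I would expand each $\widetilde{g}(n_{j,b}) = \sum_{k=1}^{n_{j,b}} T_{\Div}(n_{j,b},k) g_k$
and extend each inner sum to run over $1 \leq k \leq n$, which is harmless since
$T_{\Div}(n_{j,b},k) = 0$ when $k > n_{j,b}$.

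After swapping the order of summation to bring $\sum_{k=1}^n g_k$ to the outside,
the coefficient of $g_k$ reads
\[
T_{\Div}(n,k)\widetilde{f}(n) + \sum_{b=\pm 1} \sum_{j \geq 1}
   (-1)^j T_{\Div}(n_{j,b},k) \widetilde{f}(n_{j,b}),
\]
and since $T_{\Div}(n_{j,b},k)$ already vanishes unless $k \mid n_{j,b}$, in
particular unless $n_{j,b} \geq k$, I would truncate the $j$-sum by solving
$j(3j+b)/2 \leq n-k$, which yields exactly the upper limit
$\lfloor (\sqrt{24(n-k)+1} - b)/6 \rfloor$ appearing in the theorem. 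Finally,
since the arithmetic function $g$ is arbitrary, comparing with
$\sum_{k=1}^n s_{n,k}(f) g_k$ and matching $g_k$-coefficients gives the claimed
formula. The only real subtlety I anticipate is bookkeeping the truncation bound
carefully, i.e., verifying that the vanishing condition on $T_{\Div}$ combined with the
positivity requirement $n_{j,b} \geq 1$ collapses cleanly into the single stated
floor expression via the quadratic formula applied to $3j^2 + bj - 2(n-k) \leq 0$;
every other step is a formal manipulation of generating-function coefficients and
an exchange of finite sums.
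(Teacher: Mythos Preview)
Your proposal is correct and follows essentially the same approach as the paper's proof: both rewrite the Lambert series as $\sum_{m\geq 1}\widetilde{f}(m)\widetilde{g}(m)\,q^m$, multiply through by $(q;q)_\infty$, apply Euler's pentagonal number theorem, and then extract the coefficient of $g_k$ (the paper does this via a direct $[g_k]$ operator, you by expanding $\widetilde{g}$ with $T_{\Div}$ and swapping finite sums). The only difference is presentational---you spell out the quadratic truncation bound on $j$ explicitly, whereas the paper leaves that bookkeeping implicit.
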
 
\begin{proof} 
By the factorization in \eqref{eqn_HPFactGenExp} and the definition of $a_{\fg}(n)$ given above, we have 
that for $\widetilde{f}(n) = \sum_{d|n} f_d$ 
\begin{align*} 
s_{n,k}(f) & = [g_k] \left(\sum_{d|n} f_d\right) \times \sum_{d=1}^n g_d T_{\Div}(n, d) \\ 
     & = 
     [q^n] (q; q)_{\infty} \times \sum_{n \geq 1} T_{\Div}(n, k) \widetilde{f}(n) q^n, 
\end{align*} 
which equals the stated expansion of the sequence by Euler's pentagonal number theorem which provides that 
\begin{align*} 
(q; q)_{\infty} & = 1 + \sum_{j \geq 1} (-1)^j \left(q^{j(3j-1)/2} + q^{j(3j+1)/2}\right). 
     \qedhere 
\end{align*} 
\end{proof} 

\begin{theorem}[Inverse Sequences] 
\label{theorem_HP_InvSeqs} 
For all integers $1 \leq k \leq n$, we have the next definition of the inverse factorization matrix 
sequence which equivalently defines the expansion on the right-hand-side of \eqref{eqn_HPFactGenExp}. 
\[
s_{n,k}^{(-1)}(f) = \sum_{d|n} \frac{p(d-k)}{\widetilde{f}(d)} \mu(n/d), 
\] 
\end{theorem}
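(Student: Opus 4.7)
The plan is to invert the relation defining $s_{n,k}(f)$ directly from \eqref{eqn_HPFactGenExp} by passing through the ordinary series representation of the underlying Lambert series, extracting coefficients, and then applying M\"obius inversion to recover $g_n$ in closed form.

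First, I would rewrite the left-hand side of \eqref{eqn_HPFactGenExp} as an ordinary power series in $q$. By the very definition of $a_{\fg}(n)$ in Definition \ref{def_HP_for_LSGFs}, we have
\[
\sum_{n \geq 1} \frac{a_{\fg}(n)\, q^n}{1 - q^n} = \sum_{n \geq 1} \fg(n)\, q^n = \sum_{n \geq 1} \widetilde{f}(n)\, \widetilde{g}(n)\, q^n,
\]
where $\widetilde{g}(n) := \sum_{d \mid n} g_d$. Multiplying \eqref{eqn_HPFactGenExp} by $(q;q)_{\infty}$ and setting
\[
\widetilde{b}_n := [q^n]\left((q;q)_{\infty} \sum_{m \geq 1} \widetilde{f}(m)\, \widetilde{g}(m)\, q^m\right) = \sum_{k=1}^n s_{n,k}(f)\, g_k,
\]
gives precisely the quantity appearing in \eqref{eqn_HPFactGenExp_v2}, so the task reduces to expressing $g_n$ linearly in the $\widetilde{b}_k$.

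Next I would use Euler's partition generating function $1/(q;q)_{\infty} = \sum_{n \geq 0} p(n) q^n$ to invert the multiplication by $(q;q)_{\infty}$. Dividing by $(q;q)_{\infty}$ and comparing coefficients of $q^n$ yields
\[
\widetilde{f}(n)\, \widetilde{g}(n) = \sum_{k=1}^n p(n-k)\, \widetilde{b}_k,
\]
hence, provided $\widetilde{f}(n) \neq 0$, $\widetilde{g}(n) = \widetilde{f}(n)^{-1} \sum_{k=1}^n p(n-k)\, \widetilde{b}_k$. Since $\widetilde{g}(n) = \sum_{d \mid n} g_d$, M\"obius inversion recovers $g_n$ as
\[
g_n = \sum_{d \mid n} \mu(n/d)\, \widetilde{g}(d) = \sum_{d \mid n} \frac{\mu(n/d)}{\widetilde{f}(d)} \sum_{k=1}^d p(d-k)\, \widetilde{b}_k.
\]

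Finally I would swap the order of summation. Because $p(d-k) = 0$ whenever $d < k$, extending the inner sum to $1 \leq k \leq n$ introduces no spurious terms, and we obtain
\[
g_n = \sum_{k=1}^n \widetilde{b}_k \left(\sum_{d \mid n} \frac{p(d-k)\, \mu(n/d)}{\widetilde{f}(d)}\right),
\]
which matches \eqref{eqn_HPFactGenExp_v2} coefficient-wise and therefore identifies the bracketed expression as $s_{n,k}^{(-1)}(f)$. The only genuine obstacle is the tacit invertibility hypothesis $\widetilde{f}(d) \neq 0$ for each $d \mid n$; this should be noted as a standing assumption on $f$ (satisfied, for example, whenever $f$ is a strictly positive arithmetic function), after which the argument is purely formal manipulation of generating functions and a single M\"obius inversion.
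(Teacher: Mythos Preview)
Your proof is correct and follows essentially the same route as the paper's: both arguments use that the coefficient of $q^n$ on the Lambert side of \eqref{eqn_HPFactGenExp} is $\widetilde{f}(n)\widetilde{g}(n)$, convolve against $1/(q;q)_\infty=\sum_m p(m)q^m$, divide through by $\widetilde{f}(n)$, and finish with M\"obius inversion. The only cosmetic difference is that the paper substitutes the specific choice $g_k=s_{k,r}^{(-1)}(f)$ and invokes the matrix identity $\sum_k s_{j,k}(f)s_{k,r}^{(-1)}(f)=\Iverson{j=r}$ to collapse the inner sum, whereas you keep $g$ arbitrary, solve for $g_n$ in terms of the $\widetilde{b}_k$, and then read off the inverse matrix entries; the underlying computation is the same, and your explicit note that one must assume $\widetilde{f}(d)\neq 0$ for all $d\mid n$ is a useful clarification left tacit in the paper.
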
 
\begin{proof} 
We expand the right-hand-side of the factorization in \eqref{eqn_HPFactGenExp} 
for the sequence $g_n := s_{n,r}^{(-1)}(f)$, i.e., the exact inverse sequence, 
for some fixed $r \geq 1$ as follows\footnote{ 
     \underline{\emph{Notation}}: 
     \emph{Iverson's convention} compactly specifies 
     boolean-valued conditions and is equivalent to the 
     \emph{Kronecker delta function}, $\delta_{i,j}$, as 
     $\Iverson{n = k} \equiv \delta_{n,k}$. 
     Similarly, $\Iverson{\mathtt{cond = True}} \equiv 
                 \delta_{\mathtt{cond}, \mathtt{True}}$ 
     in the remainder of the article. 
}: 
\begin{align*} 
\widetilde{f}(n) \cdot \sum_{d|n} s_{d,r}^{(-1)}(f) & = 
     \sum_{j=0}^n \sum_{k=1}^j s_{j,k} s_{k,r}^{(-1)} \cdot p(n-j) \\ 
     & = 
     \sum_{j=0}^n \Iverson{j = r} p(n-j) = p(n-r). 
\end{align*} 
Then the last equation implies that 
$$\sum_{d|n} s_{d,r}^{(-1)}(f) = \frac{p(n-r)}{\widetilde{f}(n)}, $$ 
which by M\"obius inversion implies our stated result. 
\end{proof} 

\begin{example}[Applications of the Theorem] 
\label{example_HPThm_consequences}
For the arithmetic function pairs 
$$(f, g) := (n^t, n^s), (\phi(n), \Lambda(n)), (n^t, J_t(n)), $$ 
respectively, and some constants $s,t \in \mathbb{C}$ where $\sigma_{\alpha}(n)$ denotes the 
generalized sum-of-divisors function, $\Lambda(n)$ is von Mangoldt's function, 
$\phi(n)$ is Euler's totient function, and $J_t(n)$ is the Jordan totient function, 
we employ the equivalent expansions of the factorization result in \eqref{eqn_HPFactGenExp_v2} 
to formulate the following ``\emph{exotic}'' sums as consequences of the theorems above: 
\begin{align}
\label{eqn_nsPow_Sigmast_HPIdent_v1} 
n^s & = \sum_{k=1}^n \sum_{d|n} \frac{p(d-k)}{\sigma_t(d)} \mu(n/d)\Biggl[\sigma_t(k)\sigma_s(k) \\ 
\notag 
     & \phantom{=\sum\ } + 
     \sum_{b=\pm 1} \sum_{j=1}^{\left\lfloor \frac{\sqrt{24k+1}-b}{6} \right\rfloor} (-1)^j 
     \sigma_t\left(k-\frac{j(3j+b)}{2}   \right)\sigma_s\left(k-\frac{j(3j+b)}{2}\right)\Biggr] \\ 
\notag 
\Lambda(n) & = \sum_{k=1}^n \sum_{d|n} \frac{p(d-k)}{d} \mu(n/d)\Biggl[k\log(k) \\ 
\notag 
     & \phantom{=\sum\ } + 
     \sum_{b=\pm 1} \sum_{j=1}^{\left\lfloor \frac{\sqrt{24k-23}-b}{6} \right\rfloor} (-1)^j 
     \left(k-\frac{j(3j+b)}{2}\right)\log\left(k-\frac{j(3j+b)}{2}\right)\Biggr] \\ 
\notag 
J_t(n) & = \sum_{k=1}^n \sum_{d|n} \frac{p(d-k)}{d^t} \mu(n/d)\left[k^{2t} + 
     \sum_{b=\pm 1} \sum_{j=1}^{\left\lfloor \frac{\sqrt{24k-23}-b}{6} \right\rfloor} (-1)^j 
     \left(k-\frac{j(3j+b)}{2}\right)^{2t}\right].  
\end{align}
By forming a second divisor sum over the divisors of $n$ on both sides of the first equation above, 
the first more exotic-looking sum for the sum-of-divisors functions leads to an expression for 
$\sigma_s(n)$ as a sum over the paired product functions, $\sigma_t(n) \cdot \sigma_s(n)$. 
We do not know of another such identity relating the generalized sum-of-divisors functions existing in the 
literature or in the references which we cite in this article. 
     However, the following relations between the multiplicative 
     generalized sum-of-divisors functions and 
     special additive partition functions are known where $p_k(n)$ denotes the number of 
     partitions of $n$ into at most $k$ parts and $\operatorname{pp}(n)$ denotes the number of 
     plane, or planar, partitions of $n$ \cite[A000219]{OEIS} \cite[\S 26.9, \S 26.12]{NISTHB}: 
     \begin{align*} 
     n \cdot p(n) & = \sum_{k=0}^{n-1} p(k) \sigma_1(n) \\ 
     n \cdot p_k(n) & = \sum_{t=1}^n p_k(n-t) \sum_{\substack{j|t \\ j \leq k}} j,\ k \geq 1 \\ 
     n \cdot \operatorname{pp}(n) & = \sum_{j=1}^n \operatorname{pp}(n-j) \sigma_2(j). 
     \end{align*}  
\end{example} 

\begin{cor}[New Series for the Riemann Zeta Function] 
\label{cor_SumOfDivFns_ZetaFnIdents}
For fixed $s,t \in \mathbb{C}$ such that $\Re(s) > 1$ 
we have the following infinite sum representations of the Riemann zeta function 
where we denote the sequence of interleaved pentagonal numbers, $\omega(\pm n)$, by 
$G_j = \frac{1}{2} \left\lceil \frac{j}{2} \right\rceil \left\lceil \frac{3j+1}{2} \right\rceil$ for 
$j \geq 0$ \cite[A001318]{OEIS}: 
\begin{align*} 
\zeta(s) & = \sum_{n \geq 1} \sum_{k=1}^n \sum_{d|n} \frac{p(d-k)}{\sigma_t(d)} \mu(n/d) \times 
     \sum_{j: G_j < k} (-1)^{\lceil j/2 \rceil} \frac{\sigma_t(k-G_j) \sigma_s(k-G_j)}{(k-G_j)^s} \\ 
\zeta(s) & = \sum_{n \geq 1} \sum_{k=1}^n \sum_{d|n} \frac{d^t \cdot p(d-k)}{\sigma_t(d)} \mu(n/d) \times 
     \sum_{j: G_j < k} (-1)^{\lceil j/2 \rceil} \frac{\sigma_t(k-G_j) \sigma_s(k-G_j)}{(k-G_j)^{s+t}}.  
\end{align*} 
\end{cor}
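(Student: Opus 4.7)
The plan is to obtain both series representations as immediate consequences of the exotic identity \eqref{eqn_nsPow_Sigmast_HPIdent_v1} for $n^s$, by substituting negative values for the exponent parameters and then summing the resulting arithmetic identity over $n \geq 1$. The one elementary ingredient that makes the substitution clean is the symmetry $\sigma_{-\alpha}(m) = m^{-\alpha} \sigma_{\alpha}(m)$ for any $\alpha \in \mathbb{C}$ and positive integer $m$, which follows immediately by reindexing $d \mapsto m/d$ in $\sigma_{-\alpha}(m) = \sum_{d \mid m} d^{-\alpha}$.

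Before substituting, I would first rewrite the bracketed sum inside \eqref{eqn_nsPow_Sigmast_HPIdent_v1} in the more compact form $\sum_{j:\, G_j < k} (-1)^{\lceil j/2 \rceil} \sigma_t(k - G_j)\, \sigma_s(k - G_j)$. This uses the standard enumeration $G_0 = 0,\, G_1 = 1,\, G_2 = 2,\, G_3 = 5,\, G_4 = 7,\ldots$ of the generalized pentagonal numbers: the $j = 0$ term absorbs the isolated $\sigma_t(k)\sigma_s(k)$ contribution, the double sum over $b = \pm 1$ and $j \geq 1$ reindexes onto the remaining $G_j$, and the sign pattern $+,-,-,+,+,-,-,\ldots$ inherent in the pentagonal number theorem is encoded by $(-1)^{\lceil j/2 \rceil}$. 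The convention $\sigma_\alpha(0) = 0$ makes using the strict inequality $G_j < k$ equivalent to $G_j \leq k$.

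For the first series I would then apply the compact identity with $s$ replaced by $-s$ and invoke the above symmetry to rewrite $\sigma_{-s}(k - G_j) = (k-G_j)^{-s}\, \sigma_s(k-G_j)$; the left-hand side becomes $n^{-s}$, and summing over $n \geq 1$ gives $\zeta(s)$ on the left and exactly the first claimed triple sum on the right. For the second series I would apply the identity with both substitutions $s \mapsto -s$ and $t \mapsto -t$: the prefactor transforms as $1/\sigma_{-t}(d) = d^t/\sigma_t(d)$, producing the new $d^t$ numerator, and the innermost product collapses to $\sigma_t(k-G_j)\, \sigma_s(k-G_j)/(k-G_j)^{s+t}$, producing precisely the second identity after summing over $n$.

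The only real analytic concern, and therefore the main obstacle, is justifying the termwise summation over $n \geq 1$. For each fixed $n$ the inner triple sum is finite and equals $n^{-s}$ as an arithmetic equality, so the question reduces to absolute convergence of $\sum_{n \geq 1} n^{-s}$, which is guaranteed by the hypothesis $\Re(s) > 1$ and allows the identification of the summed left-hand side with $\zeta(s)$.
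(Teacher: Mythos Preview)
Your proposal is correct and follows essentially the same route as the paper: both start from the identity \eqref{eqn_nsPow_Sigmast_HPIdent_v1}, rewrite the bracketed pentagonal sum in the interleaved $G_j$ form, apply the symmetry $\sigma_{-\alpha}(m)=m^{-\alpha}\sigma_\alpha(m)$ (with $s\mapsto -s$ for the first series and $s\mapsto -s,\ t\mapsto -t$ for the second), and then sum over $n\geq 1$, invoking $\Re(s)>1$ for convergence. Your write-up is in fact more explicit than the paper's, spelling out how the $d^t$ numerator and the $(k-G_j)^{s+t}$ denominator arise from the double substitution.
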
 
\begin{proof} 
These two identities follow as special cases of the theorem in the form of 
\eqref{eqn_nsPow_Sigmast_HPIdent_v1} above where we note the identity for the generalized 
sum-of-divisors functions which provides that $\sigma_{-\alpha}(n) = \sigma_{\alpha}(n) / n^{\alpha}$ 
for all $\alpha \in \mathbb{C}$. 
We note that the pentagonal number theorem employed in the inner sums depending on $j$ 
is equivalent to the expansion 
\[
(q; q)_{\infty} = \sum_{j \geq 0} (-1)^{\lceil j/2 \rceil} q^{G_j}. 
\] 
The convergence of these infinite series is guaranteed by our hypothesis that $\Re(s) > 1$. 
\end{proof} 

We compare the results in the previous theorem to the known Dirichlet generating functions 
for the sum-of-divisors functions which are expanded by 
\cite[Thm.\ 291]{HARDYANDWRIGHT} \cite[\S 27.4]{NISTHB} 
\begin{align*} 
\zeta(s) \zeta(s-\alpha) & = \sum_{n \geq 1} \frac{\sigma_{\alpha}(n)}{n^s},\ \Re(s) > 1, \alpha+1 \\ 
\frac{\zeta(s) \zeta(s-\alpha) \zeta(s-\beta) \zeta(s-\alpha-\beta)}{\zeta(2s-\alpha-\beta)} & = 
     \sum_{n \geq 1} \frac{\sigma_{\alpha}(n) \sigma_{\beta}(n)}{n^s},\ \Re(s) > 1, \alpha + 1, \beta + 1. 
\end{align*} 
In particular, we note that while the series $\sum_n \sigma_{s}(n) / n^s$, and similarly 
for the second series, are divergent, our sums given in 
Corollary \ref{cor_SumOfDivFns_ZetaFnIdents} do indeed converge for $\Re(s) > 1$. 

\section{Factorization Theorems for Derivatives of Lambert Series} 
\label{Section_LSDerivs}

\subsection{Derivatives of Lambert series generating functions} 

\begin{figure}[ht!]

\begin{minipage}{\linewidth} 
\begin{center} 
\small
\begin{equation*} 
\boxed{ 
\begin{array}{cccccccccccc}
 1 & 0 & 0 & 0 & 0 & 0 & 0 & 0 & 0 & 0 & 0 & 0 \\
 1 & 2 & 0 & 0 & 0 & 0 & 0 & 0 & 0 & 0 & 0 & 0 \\
 0 & -2 & 3 & 0 & 0 & 0 & 0 & 0 & 0 & 0 & 0 & 0 \\
 -1 & 2 & -3 & 4 & 0 & 0 & 0 & 0 & 0 & 0 & 0 & 0 \\
 -2 & -4 & -3 & -4 & 5 & 0 & 0 & 0 & 0 & 0 & 0 & 0 \\
 -2 & 2 & 6 & -4 & -5 & 6 & 0 & 0 & 0 & 0 & 0 & 0 \\
 -2 & -4 & -6 & 0 & -5 & -6 & 7 & 0 & 0 & 0 & 0 & 0 \\
 -1 & 2 & -3 & 8 & 0 & -6 & -7 & 8 & 0 & 0 & 0 & 0 \\
 0 & -2 & 9 & -4 & 0 & 0 & -7 & -8 & 9 & 0 & 0 & 0 \\
 1 & 2 & -6 & -8 & 15 & 0 & 0 & -8 & -9 & 10 & 0 & 0 \\
 2 & 0 & -3 & 4 & -10 & 6 & 0 & 0 & -9 & -10 & 11 & 0 \\
 3 & 2 & 12 & 12 & -5 & 12 & 7 & 0 & 0 & -10 & -11 & 12 \\
\end{array}
}
\end{equation*}
\end{center} 
\subcaption*{\rm{(i)} $s_{1,n,k}$} 
\end{minipage} 

\begin{minipage}{\linewidth} 
\begin{center} 
\small
\begin{equation*} 
\boxed{ 
\begin{array}{cccccccccccc}
 1 & 0 & 0 & 0 & 0 & 0 & 0 & 0 & 0 & 0 & 0 & 0 \\
 -\frac{1}{2} & \frac{1}{2} & 0 & 0 & 0 & 0 & 0 & 0 & 0 & 0 & 0 & 0 \\
 -\frac{1}{3} & \frac{1}{3} & \frac{1}{3} & 0 & 0 & 0 & 0 & 0 & 0 & 0 & 0 & 0 \\
 \frac{1}{4} & 0 & \frac{1}{4} & \frac{1}{4} & 0 & 0 & 0 & 0 & 0 & 0 & 0 & 0 \\
 0 & \frac{3}{5} & \frac{2}{5} & \frac{1}{5} & \frac{1}{5} & 0 & 0 & 0 & 0 & 0 & 0 & 0 \\
 1 & 0 & \frac{1}{6} & \frac{1}{3} & \frac{1}{6} & \frac{1}{6} & 0 & 0 & 0 & 0 & 0 & 0 \\
 \frac{4}{7} & 1 & \frac{5}{7} & \frac{3}{7} & \frac{2}{7} & \frac{1}{7} & \frac{1}{7} & 0 & 0 & 0 & 0 & 0 \\
 \frac{9}{8} & \frac{7}{8} & \frac{5}{8} & \frac{3}{8} & \frac{3}{8} & \frac{1}{4} & \frac{1}{8} & \frac{1}{8} & 0 & 0 & 0 & 0 \\
 \frac{16}{9} & \frac{4}{3} & \frac{8}{9} & \frac{7}{9} & \frac{5}{9} & \frac{1}{3} & \frac{2}{9} & \frac{1}{9} & \frac{1}{9} & 0 & 0 & 0 \\
 \frac{5}{2} & \frac{11}{10} & \frac{11}{10} & \frac{9}{10} & \frac{1}{2} & \frac{1}{2} & \frac{3}{10} & \frac{1}{5} & \frac{1}{10} & \frac{1}{10} & 0 & 0 \\
 \frac{31}{11} & \frac{30}{11} & 2 & \frac{15}{11} & 1 & \frac{7}{11} & \frac{5}{11} & \frac{3}{11} & \frac{2}{11} & \frac{1}{11} & \frac{1}{11} & 0 \\
 \frac{13}{4} & \frac{8}{3} & \frac{7}{4} & \frac{5}{4} & \frac{13}{12} & \frac{3}{4} & \frac{7}{12} & \frac{5}{12} & \frac{1}{4} & \frac{1}{6} & \frac{1}{12} & \frac{1}{12} \\
\end{array}
} 
\end{equation*} 
\end{center} 
\subcaption*{\rm{(ii)} $s_{1,n,k}^{(-1)}$} 
\end{minipage}

\caption{Factorization matrix sequences for the first-order derivatives 
         of an arbitrary Lambert series generating function when $1 \leq n,k \leq 12$} 
\label{table_FirstOrderDerivs_MatrixSeqExamples} 

\end{figure} 

We seek analogous factorization theorems for the higher-order $t^{th}$ derivatives for all 
integers $t \geq 1$ of an arbitrary Lambert series in the form of 
\begin{align} 
\label{eqn_LSDerivs_FirstFactorizationAttempt} 
q^t \cdot D_t\left[\sum_{n \geq t} \frac{a_n q^n}{1-q^n}\right] & = 
     \frac{1}{(q; q)_{\infty}} \sum_{n \geq t} \sum_{k=t}^n s_{t,n,k} a_k \cdot q^n. 
\end{align} 
We note that we consider these sums over $n \geq t$ to produce an invertible matrix of the 
factorization sequences $s_{t,n,k}$. 
At first computation, the corresponding matrix sequences for these higher-order derivatives 
do not suggest any immediate intuitions to exact formulas for $s_{t,n,k}$ and $s_{t,n,k}^{(-1)}$ 
as in the factorizations expanded above. The listings given in 
Figure \ref{table_FirstOrderDerivs_MatrixSeqExamples} provide the first several rows of these 
sequences in the first-order derivative case where $t := 1$ for comparison with our intuition. 
However, may still prove using the method invoked in the proof of 
Theorem \ref{theorem_HP_InvSeqs} to show that in the first-order case we have that\footnote{ 
     More generally, if we expand the next mixed series of $j^{th}$ derivatives and 
     initial terms annihilated by the differential operator as 
     \[
     q^j \cdot D_j\left[\sum_{n \geq 1} \frac{a_n q^n}{1-q^n}\right] + 
     \sum_{i=1}^{j-1} (a \ast 1)(i) q^i = 
     \frac{1}{(q; q)_{\infty}} \sum_{n \geq t} \sum_{k=t}^n s_{j,n,k} a_k \cdot q^n, 
     \] 
     for integers $j \geq 2$, we can easily prove that 
     $s_{j,n,k} = s_{n,k} = [q^n] q^k / (1-q^k) (q; q)_{\infty}$ for $1 \leq n < j$ and 
     consequently that 
     \[ 
     s_{j,n,k}^{(-1)} = \sum_{d|n} \frac{p(d-k)}{\binom{d}{j} \cdot j! + 
     \delta_{d,1} + \delta_{d,2} + \cdots + \delta_{d,j-1}} \mu(n/d), 
     \] 
     using the proof method in Theorem \ref{theorem_HP_InvSeqs}. 
}
\begin{align*} 
s_{1,n,k}^{(-1)} & = \sum_{d|n} \frac{p(d-k)}{d} \mu(n/d), 
\end{align*} 
which, in light of the construction of Corollary \ref{cor_SumOfDivFns_ZetaFnIdents} and its 
notation, leads to the following further 
convergent series infinite expansion of the Riemann zeta function for $\Re(s) > 1$: 
\begin{align*} 
\zeta(s) & = \sum_{n \geq 1} \sum_{k=1}^n \sum_{d|n} \frac{p(d-k)}{d} \mu(n/d) \times 
     \sum_{j: G_j < k} (-1)^{\lceil j/2 \rceil} \frac{\sigma_s(k-G_j)}{(k-G_j)^{s-1}}. 
\end{align*} 
If we dig deeper into the expansions of the derivatives of arbitrary Lambert series, 
we can prove other more natural 
exact formulas for both matrix sequences which are for the most part actually 
just restatements of consequences of known factorization theorems already proved in the 
references \cite[\S 4]{MERCA-SCHMIDT3}. We consider the factorizations of these higher-order 
derivative cases again as a separate topic in this article 
due to the significance of the interpretations 
and the breadth of applications which we can give by explicitly defining the exact 
factorization expansions in these cases below. 

\subsection{Main results} 

We will next require the statements of the next two results proved in \cite{SCHMIDT-BDDDIVSUMS} to 
state and prove the main results in this section. We refer the reader to the proofs of these 
two lemmas given in the reference. 

\begin{lemma}[Modified Lambert Series Coefficients] 
For any fixed arithmetic function $a_n$ and integers $m,t \geq 1$, $n, k \geq 0$, we have the 
following identity for the series coefficients of modified Lambert series generating function 
expansions: 
\begin{align} 
\label{eqn_BinCvlDivFn_as_CoeffsOfLSeries_exp} 
[q^n] \sum_{i \geq t} \frac{a_i q^{mi}}{(1-q^i)^{k+1}} & = 
     \sum_{\substack{d|n \\ t \leq d \leq \left\lfloor \frac{n}{m} \right\rfloor}} 
     \binom{\frac{n}{d}-m+k}{k} a_d. 
\end{align} 
\end{lemma}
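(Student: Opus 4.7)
The plan is a direct coefficient extraction using the negative binomial series, with no combinatorial interpretation needed. First I would expand the reciprocal factor as
\[
\frac{1}{(1-q^i)^{k+1}} = \sum_{j \geq 0} \binom{j+k}{k} q^{ij},
\]
which is valid as a formal power series in $q$ for each fixed $i \geq 1$. Multiplying through by $a_i q^{mi}$ shifts the exponents uniformly, giving
\[
\frac{a_i q^{mi}}{(1-q^i)^{k+1}} = \sum_{j \geq 0} \binom{j+k}{k} a_i \, q^{i(m+j)}.
\]

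Next I would extract the coefficient of $q^n$ from a single term indexed by $i$. A monomial $q^{i(m+j)}$ equals $q^n$ iff $n = i(m+j)$ for some integer $j \geq 0$. This forces $i \mid n$ and $n/i \geq m$, i.e.\ $i \leq \lfloor n/m \rfloor$; when these conditions hold, the value of $j$ is uniquely determined as $j = n/i - m$, and the corresponding contribution is $\binom{n/i - m + k}{k} a_i$.

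Finally I would sum over $i \geq t$. Swapping the order of coefficient extraction and the (infinite but pointwise finite) sum over $i$ is legitimate here because for each fixed $n$ only finitely many values of $i$ contribute (namely those divisors of $n$ in the range $t \leq i \leq \lfloor n/m \rfloor$). This yields
\[
[q^n] \sum_{i \geq t} \frac{a_i q^{mi}}{(1-q^i)^{k+1}}
= \sum_{\substack{d \mid n \\ t \leq d \leq \lfloor n/m \rfloor}} \binom{\tfrac{n}{d} - m + k}{k} a_d,
\]
after renaming $i$ to $d$, as required.

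There is no substantive obstacle in this argument; the only minor point requiring care is the correct specification of the summation range on $d$. One should verify that the upper bound $d \leq \lfloor n/m \rfloor$ really is equivalent to $n/d \geq m$ (so that the binomial coefficient $\binom{n/d - m + k}{k}$ is well-defined and nonzero in the natural sense), and that the lower bound $d \geq t$ comes directly from the starting index of the outer sum. With these bookkeeping details handled, the identity follows.
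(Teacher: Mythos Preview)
Your argument is correct. The negative binomial expansion of $(1-q^i)^{-(k+1)}$ followed by the exponent-matching condition $n=i(m+j)$ is exactly the right mechanism, and your bookkeeping on the range $t \leq d \leq \lfloor n/m \rfloor$ is accurate: for a divisor $d$ of $n$, the requirement $n/d \geq m$ is equivalent to $d \leq \lfloor n/m \rfloor$ since $n/d$ is an integer.

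The paper does not actually supply an inline proof of this lemma; it quotes the statement from \cite{SCHMIDT-BDDDIVSUMS} and refers the reader there. Your direct coefficient extraction is the standard and natural route, and almost certainly what the cited reference does as well, so there is nothing further to compare.
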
 

\begin{lemma}[Higher-Order Derivatives of Lambert Series] 
\label{lemma_sthDerivsOfLambertSeries} 
For any fixed non-zero $q \in \mathbb{C}$, $i \in \mathbb{Z}^{+}$, and 
prescribed integer $s \geq 0$, we have the following two results\footnote{ 
     \underline{\emph{Notation}}: 
     The bracket notation of $\gkpSI{n}{k} \equiv (-1)^{n-k} s(n, k)$ 
     denotes the unsigned triangle of Stirling numbers of the first kind and 
     $\gkpSII{n}{k} \equiv S(n, k)$ denotes the triangle of Stirling numbers of the 
     second kind.
}: 
\begin{align} 
\tag{i} 
q^s D^{(s)}\left[\frac{q^i}{1-q^i}\right] & = 
     \sum_{m=0}^s \sum_{k=0}^m \gkpSI{s}{m} \gkpSII{m}{k} 
     \frac{(-1)^{s-k} k! \cdot i^m}{(1-q^i)^{k+1}} \\ 
\tag{ii} 
q^s D^{(s)}\left[\frac{q^i}{1-q^i}\right] & = 
     \sum_{r=0}^s \left( 
     \sum_{m=0}^s \sum_{k=0}^m \gkpSI{s}{m} \gkpSII{m}{k} \binom{s-k}{r} 
     \frac{(-1)^{s-k-r} k! \cdot i^m}{(1-q^i)^{k+1}} 
     \right) q^{(r+1)i}. 
\end{align} 
\end{lemma}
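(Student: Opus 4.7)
The plan is to establish both identities by working termwise from the geometric-series expansion $\frac{q^i}{1-q^i} = \sum_{n \geq 1} q^{in}$. Once the operator $q^s D^{(s)}$ is moved inside the sum, the argument reduces to two standard pieces of combinatorial machinery: the Stirling conversion between ordinary monomials and falling factorials, and the closed-form generating function $\sum_{n \geq 0} \binom{n}{k} q^{in} = q^{ik}/(1-q^i)^{k+1}$, which already appears in the preceding lemma in the form of \eqref{eqn_BinCvlDivFn_as_CoeffsOfLSeries_exp}.

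For part (i), I would first record the basic termwise action $q^s D^{(s)} q^{in} = (in)^{\underline{s}} q^{in}$, where $(in)^{\underline{s}}$ is the falling factorial. Next, expand the falling factorial in ordinary powers using signed Stirling numbers of the first kind, $(in)^{\underline{s}} = \sum_{m=0}^s (-1)^{s-m} \gkpSI{s}{m} i^m n^m$, and then rewrite each $n^m$ as a linear combination of falling factorials via Stirling numbers of the second kind, $n^m = \sum_{k=0}^m \gkpSII{m}{k} n^{\underline{k}}$. Finally, apply the identity $\sum_{n \geq 0} n^{\underline{k}} q^{in} = k! \cdot q^{ik}/(1-q^i)^{k+1}$, which follows from $n^{\underline{k}} = k!\binom{n}{k}$ together with the standard generating function cited above. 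Interchanging the three nested summations and reconciling signs then yields the stated Stirling-convolution form on the right-hand side.

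For part (ii), I would derive the triple-sum form from the formula obtained in part (i) by re-expanding each factor $q^{ik}/(1-q^i)^{k+1}$ in a way that exhibits the outer index $r$. The natural route is the binomial identity $q^{ik} = ((q^i-1)+1)^k$ (or, equivalently, a shift-of-index on the geometric coefficient), which redistributes powers of $q^i$ so that each summand in the $r$-sum carries exactly one factor $q^{(r+1)i}$ and a binomial weight $\binom{s-k}{r}$ with alternating sign $(-1)^{s-k-r}$. As a sanity check, collapsing the $r$-sum back via the binomial theorem, $\sum_{r} \binom{s-k}{r}(-1)^{s-k-r} q^{(r+1)i} = q^i (q^i-1)^{s-k}$, should recover (i) after simplification, and this identity can be used to calibrate the constants in the $r$-expansion.

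The main obstacle will be the combinatorial bookkeeping rather than any deep analytic input: careful management of the three indices, of the sign conventions for signed versus unsigned Stirling numbers, and of the boundary contributions from $n = 0$ (which vanish because $\gkpSI{s}{0} = 0$ for $s \geq 1$, so that the discrepancy between $\tfrac{q^i}{1-q^i}$ and $\tfrac{1}{1-q^i}$ is harmless). In particular, matching precisely the outer/inner ordering ($k \leq m \leq s$) and the exact $q^{(r+1)i}$ placement in (ii) without introducing spurious terms will require the reindexings above to be carried out in a single unambiguous direction rather than juggled symbolically.
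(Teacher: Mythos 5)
First, a caveat: the paper does not actually prove this lemma --- it is quoted from \cite{SCHMIDT-BDDDIVSUMS} and the reader is referred there for the proof --- so there is no internal argument to compare yours against. Your machinery is nonetheless the natural one, and its first four steps are sound: $q^s D^{(s)} q^{in} = (in)^{\underline{s}} q^{in}$, the expansion $(in)^{\underline{s}} = \sum_{m}(-1)^{s-m}\gkpSI{s}{m} i^m n^m$, the conversion $n^m = \sum_k \gkpSII{m}{k} n^{\underline{k}}$, and $\sum_{n \geq 0} n^{\underline{k}} q^{in} = k!\, q^{ik}/(1-q^i)^{k+1}$ are all correct, and the $n=0$ boundary term is handled properly.

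The gap is in the final step of each part, where you assert that ``reconciling signs'' and ``calibrating constants'' lands on the printed right-hand sides. Carried out honestly, your computation yields
\[
q^s D^{(s)}\left[\frac{q^i}{1-q^i}\right] \;=\; \sum_{m=0}^{s}\sum_{k=0}^{m} (-1)^{s-m}\gkpSI{s}{m}\gkpSII{m}{k}\, k!\, i^m\, \frac{q^{ik}}{(1-q^i)^{k+1}} \qquad (s \geq 1),
\]
which differs from the stated (i) in two places that no reindexing removes: the sign exponent is $s-m$ rather than $s-k$, and there is a factor $q^{ik}$ in the numerator. The discrepancy is real, not notational: at $s=1$ the stated (i) evaluates to $i/(1-q^i)^2$, while $qD[q^i/(1-q^i)] = iq^i/(1-q^i)^2$. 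Your own proposed sanity check for (ii) would likewise have exposed the problem had you carried it out: collapsing the $r$-sum gives $\sum_r \binom{s-k}{r}(-1)^{s-k-r}q^{(r+1)i} = q^i(q^i-1)^{s-k}$, which is not the factor $(-1)^{s-m}q^{ik}$ that your part (i) computation produces, and a direct check at $s=2$ shows the stated (ii) also disagrees with the true second derivative (its coefficient of $q^{2i}$ is $5i^2-i$ versus the correct $4i^2-2i$). So either the lemma as transcribed here contains typos --- in which case your method proves a corrected statement but not this one --- or the ``reconciliation'' step fails; in either case the writeup does not establish the identities as printed, and the point of failure is exactly the step you deferred to bookkeeping.
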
 

\begin{prop}
\label{prop_Atn_LambertSeriesGFs} 
For integers $n,t \geq 1$, let the function $A_t(n)$ be defined as follows: 
\begin{align*} 
A_t(n) & := \sum_{\substack{0 \leq k \leq m \leq t \\ 0 \leq r \leq t}} \sum_{d|n} 
     \gkpSI{t}{m} \gkpSII{m}{k} \binom{t-k}{r} \binom{\frac{n}{d}-1-r+k}{k} 
     (-1)^{t-k-r} k! \times \\ 
     & \phantom{:= \sum_{\substack{0 \leq k \leq m \leq t \\ 0 \leq r \leq t}} \sum_{d|n}\ } \times 
     d^m a_d \cdot \Iverson{t \leq d \leq \left\lfloor \frac{n}{r+1} \right\rfloor}. 
\end{align*} 
Then we have the next two Lambert series expansions for the function $A_t(n)$ given by 
\[
A_t(n) = [q^n] q^t \cdot D_t\left[\sum_{n \geq t} \frac{a_n q^n}{1-q^n}\right] = 
     [q^n] \sum_{n \geq 1} \frac{(A_t \ast \mu)(n) q^n}{1-q^n}. 
\] 
\end{prop}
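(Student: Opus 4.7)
The plan is to prove the two stated equalities separately. The second equality, expressing $A_t(n)$ as the $n$th coefficient of the Lambert series over $(A_t \ast \mu)(n)$, is essentially automatic. The standard Lambert series identity
\[
\sum_{n \geq 1} \frac{b_n q^n}{1-q^n} = \sum_{n \geq 1} q^n \sum_{d|n} b_d
\]
applied with $b_n := (A_t \ast \mu)(n)$, combined with Möbius inversion $\sum_{d|n}(A_t \ast \mu)(d) = A_t(n)$, yields the claim at once. Hence the substantive work lies entirely in proving the first equality.

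For the first equality, I would proceed by term-by-term differentiation of the Lambert series (viewed as a formal power series, so there is no convergence issue), pushing the operator $q^t D_t$ inside the sum and applying Lemma \ref{lemma_sthDerivsOfLambertSeries}(ii) to each summand $a_i q^i / (1 - q^i)$. This produces the intermediate identity
\begin{align*}
q^t D_t\Biggl[\sum_{i \geq t} \frac{a_i q^i}{1-q^i}\Biggr]
 & = \sum_{r=0}^{t} \sum_{m=0}^{t} \sum_{k=0}^{m} \gkpSI{t}{m} \gkpSII{m}{k} \binom{t-k}{r} (-1)^{t-k-r} k! \\
 & \quad \times \sum_{i \geq t} \frac{i^m\, a_i \cdot q^{(r+1)i}}{(1-q^i)^{k+1}}.
\end{align*}
I would then take $[q^n]$ of both sides and apply the modified Lambert series coefficient formula in \eqref{eqn_BinCvlDivFn_as_CoeffsOfLSeries_exp} with the parameter choice $m := r+1$ (and with $a_i$ replaced by $i^m a_i$) to the innermost series, obtaining
\[
[q^n] \sum_{i \geq t} \frac{i^m a_i \cdot q^{(r+1)i}}{(1-q^i)^{k+1}}
= \sum_{\substack{d|n \\ t \leq d \leq \lfloor n/(r+1)\rfloor}} \binom{n/d - (r+1) + k}{k} d^m a_d.
\]
Substituting this back into the display above, collapsing the nested sums, and rewriting the divisibility-range constraints as the Iverson bracket $\Iverson{t \leq d \leq \lfloor n/(r+1)\rfloor}$ recovers the expression defining $A_t(n)$.

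The main obstacle is not conceptual but rather careful bookkeeping. One must check that the binomial $\binom{n/d - (r+1) + k}{k}$ produced by \eqref{eqn_BinCvlDivFn_as_CoeffsOfLSeries_exp} is identical to the $\binom{n/d - 1 - r + k}{k}$ appearing in the statement, that the triple summation range $0 \leq k \leq m \leq t$, $0 \leq r \leq t$ in the definition of $A_t(n)$ is consistent with Stirling-number conventions (in particular, the vanishing of $\gkpSI{t}{m}$ for $m > t$ and of $\gkpSII{m}{k}$ for $k > m$ implicitly truncates the ranges), and that the binomial $\binom{t-k}{r}$ together with the constraint from Lemma \ref{lemma_sthDerivsOfLambertSeries}(ii) correctly restrict $r$ to $0 \leq r \leq t - k \leq t$. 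Once these index identifications are verified, the two sides agree term-by-term, completing the proof of the first identity.
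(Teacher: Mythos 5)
Your proposal is correct and follows essentially the same route as the paper: the first equality is obtained by applying Lemma \ref{lemma_sthDerivsOfLambertSeries}(ii) with $s := t$ termwise and then extracting coefficients via \eqref{eqn_BinCvlDivFn_as_CoeffsOfLSeries_exp}, and the second equality is immediate from M\"obius inversion. You have simply written out the index bookkeeping (the substitution $m := r+1$ in \eqref{eqn_BinCvlDivFn_as_CoeffsOfLSeries_exp} and the identification $\binom{n/d-(r+1)+k}{k} = \binom{n/d-1-r+k}{k}$) that the paper leaves implicit.
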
 
\begin{proof} 
The first equation follows from \eqref{eqn_BinCvlDivFn_as_CoeffsOfLSeries_exp} 
applied to Lemma \ref{lemma_sthDerivsOfLambertSeries} when $s := t$. 
To prove the second form of a Lambert series generating function over some sequence $c_n$ 
enumerating $A_t(n)$, we require that 
$$\sum_{d|n} c_d = A_t(n), $$ which is true if and only if 
$$c_d = \sum_{d|n} A_t(d) \mu(n/d) = (A_t \ast \mu)(n), $$ by M\"obius inversion. 
\end{proof} 

\begin{theorem}[Higher-Order Derivatives of Lambert Series Generating Functions] 
\label{theorem_LSDerivs}
Let the notation for the function $A_t(n)$ be defined as in 
Proposition \ref{prop_Atn_LambertSeriesGFs}. Then we have the next formulas for 
$A_t(n)$ given by 
\begin{align*} 
A_t(n) & = [q^n] \frac{1}{(q; q)_{\infty}} \sum_{n \geq 1} \sum_{k=1}^n 
     \widetilde{s}_{n,k}(\mu) A_t(k) \cdot q^n \\ 
A_t(n) & = \sum_{k=1}^n \widetilde{s}_{n,k}^{(-1)}(\mu) \left[ 
     A_t(k) + \sum_{p = \pm 1} \sum_{j=1}^{\left\lfloor \frac{\sqrt{24k-23}-p}{6} \right\rfloor} 
     (-1)^j A_t\left(k-\frac{j(3j+p)}{2}\right)\right], 
\end{align*} 
where $$\widetilde{s}_{n,k}(\mu) = \sum_{j=1}^n s_{n,kj} \cdot \mu(j), $$ for 
$s_{n,k} := s_o(n,k) - s_e(n,k)$  when the 
sequences $s_o(n, k)$ and $s_e(n, k)$ respectively denote the number of $k$'s in all 
partitions of $n$ into an odd (even) number of distinct parts, and where a 
$n$-fold convolution formula involving $\mu$ for the inverse matrix sequence 
$\widetilde{s}_{n,k}^{(-1)}(\mu)$ is proved explicitly 
in the references \cite[\S 4]{MERCA-SCHMIDT3}. 
Moreover, for all $n \geq 1$ we have that the full Lambert series $t^{th}$ derivative 
formula is given by 
\begin{align*} 
\frac{n!}{(n-t)!} \cdot \sum_{d|n} a_d & = \left(\sum_{i=1}^{t-1} 
     \sum_{k=1}^{\left\lfloor \frac{n}{i} \right\rfloor} 
     \widetilde{s}_{n,ik}^{(-1)}(\mu) \cdot \frac{(ik)! a_i}{(ik-t)!}\right) + 
     A_t(n). 
\end{align*} 
\end{theorem}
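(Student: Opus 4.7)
The plan is to establish the three displayed identities in sequence, using Proposition \ref{prop_Atn_LambertSeriesGFs} together with the convolution-type Lambert series factorization theorem recalled in the introduction. Proposition \ref{prop_Atn_LambertSeriesGFs} already exhibits $A_t(n) = [q^n]\sum_{m\geq 1} (A_t\ast\mu)(m) q^m / (1-q^m)$, so that $A_t$ is realized as the divisor sum sequence attached to the function $A_t\ast\mu$. Applying the convolution-form factorization $\sum_{n\geq 1}(f\ast g)(n) q^n / (1-q^n) = \frac{1}{(q;q)_\infty}\sum_n\sum_k \widetilde{s}_{n,k}(g) f(k) q^n$ with $f := A_t$ and $g := \mu$ then produces the first asserted expansion directly.

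For the second identity, my approach is to multiply both sides of the first identity by $(q;q)_\infty$, expand via Euler's pentagonal number theorem $(q;q)_\infty = 1 + \sum_{j\geq 1} (-1)^j (q^{j(3j-1)/2} + q^{j(3j+1)/2})$, and compare coefficients of $q^n$. This yields the intermediate relation $\sum_k \widetilde{s}_{n,k}(\mu) A_t(k) = A_t(n) + \sum_{p=\pm 1}\sum_{j\geq 1} (-1)^j A_t(n - j(3j+p)/2)$, with the inner sums truncated by the usual $\lfloor (\sqrt{24n-23}-p)/6 \rfloor$ bound. Premultiplying by the inverse matrix $\widetilde{s}^{(-1)}_{n,k}(\mu)$, whose $n$-fold convolution formula involving $\mu$ is given explicitly in \cite[\S 4]{MERCA-SCHMIDT3}, then isolates $A_t(n)$ on the left and produces the second stated equation.

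For the third identity, I would split $\sum_{n\geq 1} a_n q^n / (1-q^n) = \sum_{n\geq t} a_n q^n / (1-q^n) + \sum_{i=1}^{t-1} a_i q^i / (1-q^i)$ and apply $q^t D_t[\cdot]$. The left-hand side of the identity is immediate from $q^t D_t[\sum_N (a\ast 1)(N) q^N] = \sum_N \frac{N!}{(N-t)!} (a\ast 1)(N) q^N$, while the $A_t(n)$ summand on the right is the contribution from $\sum_{n\geq t}$ by Proposition \ref{prop_Atn_LambertSeriesGFs}. Each remaining boundary piece $a_i q^t D_t[q^i/(1-q^i)] = a_i \sum_{m\geq 1} \frac{(mi)!}{(mi-t)!} q^{mi}$ is then re-expanded by re-running the inversion argument of part (2): the substitution $k \mapsto ik$ on $\widetilde{s}^{(-1)}_{n,ik}(\mu)$ encodes the fact that only multiples of $i$ appear in these expansions, and summing over $i = 1, \ldots, t-1$ assembles the asserted double sum.

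The hard part will be part (3): carrying through the bookkeeping to confirm that the output of the inversion procedure applied to the $t-1$ boundary contributions lands on precisely the indices $\widetilde{s}^{(-1)}_{n,ik}(\mu)$ asserted in the theorem, rather than on some reindexed or Möbius-twisted variant. This step depends essentially on the explicit Möbius-convolution formula for $\widetilde{s}^{(-1)}_{n,k}(\mu)$ taken from \cite[\S 4]{MERCA-SCHMIDT3}, and on a careful treatment of the falling factorial $(ik)!/(ik-t)!$, which vanishes automatically when $ik < t$ and so silently restricts the inner $k$-sum to the range $\lceil t/i\rceil \leq k \leq \lfloor n/i\rfloor$.
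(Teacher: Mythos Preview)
Your proposal follows essentially the same route as the paper's own proof. For the first identity you invoke Proposition~\ref{prop_Atn_LambertSeriesGFs} to write $A_t(n)$ as the Lambert series over $A_t\ast\mu$ and then apply the convolution-type factorization with $(f,g)=(A_t,\mu)$; this is exactly what the paper does, citing \cite[\S 4]{MERCA-SCHMIDT3}. For the second identity you multiply through by $(q;q)_\infty$, use the pentagonal number theorem, and invert via $\widetilde{s}_{n,k}^{(-1)}(\mu)$; the paper summarizes this as ``a consequence of the first whose explicit expansions \ldots are proved in the reference,'' so again the approaches coincide. For the third identity you split the Lambert series at $n=t$, recover $A_t(n)$ from the tail, and treat the boundary pieces $a_i q^t D_t[q^i/(1-q^i)]$ separately; the paper does the same, phrasing it as ``adding back in the subtracted Lambert series terms'' and noting that the indices $ik$ arise because the coefficients of $q^i/(1-q^i)$ and its $q^t$-scaled derivatives are supported on multiples of $i$. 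Your added remark that the boundary pieces must be passed through the part-(2) inversion to produce the $\widetilde{s}_{n,ik}^{(-1)}(\mu)$ factors, and that $(ik)!/(ik-t)!$ vanishes for $ik<t$, supplies detail the paper leaves implicit; you are right that this is where the bookkeeping is most delicate.
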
 
\begin{proof} 
The first result follows from the factorizations of the Lambert series over the convolution of 
two arithmetic functions proved in the reference \cite[\S 4]{MERCA-SCHMIDT3} where our 
Lambert series expansion in question is provided by
Proposition \ref{prop_Atn_LambertSeriesGFs} above. Similarly, the second result is a 
consequence of the first whose explicit expansions, i.e., for the inverse sequence are 
again proved in the reference. The last equation in the theorem follows from the 
proposition and adding back in the subtracted Lambert series terms when the summation for the 
series considered for $A_t(n)$ starts from $n \geq t$ instead of from one. 
The multiples of $ik$ in the last formula reflect that the coefficients of $q^i / (1-q^i)$ and its 
$q^t$-scaled derivatives are always zero unless the coefficient index is a multiple of $i$. 
\end{proof} 

\subsection{Another related factorization} 

\begin{remark}[Another Factorization] 
The first factorization expansion we considered in \eqref{eqn_LSDerivs_FirstFactorizationAttempt} 
of this section is obtained by applying Lemma \ref{lemma_ARelatedFactResult} in the case where 
\begin{align*} 
b_{n,i} & = [a_i] (A_t \ast \mu)(n) \\ 
     & = 
     \sum_{\substack{0 \leq k \leq m \leq t \\ 0 \leq r \leq t}} \sum_{d|n} 
     \gkpSI{t}{m} \gkpSII{m}{k} \binom{t-k}{r} \binom{\frac{d}{i}-1-r+k}{k} 
     (-1)^{t-k-r} k! \times \\ 
     & \phantom{:= \sum_{\substack{0 \leq k \leq m \leq t \\ 0 \leq r \leq t}} \sum_{d|n}\ } \times 
     i^m T_{\Div}(d, i) \mu(n/d) \cdot 
     \Iverson{t \leq i \leq \left\lfloor \frac{d}{r+1} \right\rfloor}. 
\end{align*} 
In this case, we can obtain a similar expansion of the middle identity in 
Theorem \ref{theorem_LSDerivs} in the form of 
\begin{align*} 
a_n & = \sum_{k=1}^n s_{t,n,k}^{(-1)}(b) \left[ 
     A_t(k) + \sum_{p = \pm 1} \sum_{j=1}^{\left\lfloor \frac{\sqrt{24k-23}-p}{6} \right\rfloor} 
     (-1)^j A_t\left(k-\frac{j(3j+p)}{2}\right)\right]. 
\end{align*} 
\end{remark} 

\begin{lemma}[A Related Factorization Result] 
\label{lemma_ARelatedFactResult}
If we expand the Lambert series factorization 
\[
\sum_{n \geq 1} \frac{\sum_{j=1}^n b_{n,j} a_j \cdot q^n}{1-q^n} = 
     \frac{1}{(q; q)_{\infty}} \sum_{n \geq 1} \sum_{k=1}^n s_{n,k}(b) a_k \cdot q^n, 
\]
then we have the formula 
\[
s_{n,k}(b) = \sum_{j=1}^n s_{n,j} \cdot b_{j,k}, 
\] 
where $s_{n,k} := [q^n] (q; q)_{\infty} q^k / (1-q^k) = s_o(n, k) - s_e(n, k)$ when the 
sequences $s_o(n, k)$ and $s_e(n, k)$ respectively denote the number of $k$'s in all 
partitions of $n$ into an odd (even) number of distinct parts. 
\end{lemma}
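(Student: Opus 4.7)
The plan is to observe that the left-hand side of the stated factorization is just a standard Lambert series whose coefficient sequence is the triangular convolution $c_n := \sum_{j=1}^n b_{n,j} a_j$, and then to invoke the canonical factorization theorem recalled at the start of the paper (the one with matrix entries $s_{n,k} = [q^n](q;q)_\infty q^k/(1-q^k)$) in order to read off $s_{n,k}(b)$ by comparing coefficients of $a_k q^n$ on both sides.

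In more detail, I would first write
\[
\sum_{n \geq 1} \frac{\sum_{j=1}^n b_{n,j} a_j \cdot q^n}{1-q^n}
   = \sum_{n \geq 1} \frac{c_n \cdot q^n}{1-q^n},
\]
with $c_n = \sum_{j=1}^n b_{n,j} a_j$, so that the canonical factorization yields
\[
\sum_{n \geq 1} \frac{c_n q^n}{1-q^n}
   = \frac{1}{(q;q)_\infty} \sum_{n \geq 1} \sum_{j=1}^n s_{n,j} c_j \cdot q^n
   = \frac{1}{(q;q)_\infty} \sum_{n \geq 1} \sum_{j=1}^n s_{n,j} \left(\sum_{k=1}^j b_{j,k} a_k\right) q^n.
\]
Interchanging the order of the inner summations (legitimate since each is a finite triangular sum), the coefficient of $a_k q^n / (q;q)_\infty$ becomes $\sum_{j=k}^n s_{n,j} b_{j,k}$; extending the range down to $j=1$ is harmless because $b_{j,k}=0$ whenever $j<k$ by the triangular convention implicit in the hypothesis $\sum_{j=1}^n b_{n,j} a_j$.

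Then I would match this coefficient against the prescribed right-hand side $\frac{1}{(q;q)_\infty}\sum_{n\geq 1}\sum_{k=1}^n s_{n,k}(b) a_k q^n$. Since the $a_k$ are treated as formal indeterminates (the factorization is required to hold for every arithmetic function $a$), the coefficients in front of each $a_k q^n$ must agree identically, which gives exactly
\[
s_{n,k}(b) = \sum_{j=1}^n s_{n,j} \cdot b_{j,k},
\]
as claimed. The final remark about $s_{n,k} = s_o(n,k) - s_e(n,k)$ is already established in the cited references and requires no further argument here.

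There is no real obstacle in this proof: the only subtlety is justifying that the factorization is assumed to hold as a formal identity in the $a_k$, so that coefficient matching with respect to the $a_k$ is legitimate and uniquely determines $s_{n,k}(b)$. If one preferred, one could avoid this by testing against the Kronecker sequence $a_\ell = \delta_{\ell,k}$ for each fixed $k$, which isolates a single column of the factorization and yields the same formula directly.
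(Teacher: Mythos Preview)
Your proof is correct and follows essentially the same approach as the paper: both arguments treat the $a_k$ as formal indeterminates and match coefficients of $a_k q^n$ after multiplying through by $(q;q)_\infty$. The only cosmetic difference is that you invoke the canonical factorization $\sum_n c_n q^n/(1-q^n) = (q;q)_\infty^{-1}\sum_n\sum_j s_{n,j}c_j q^n$ as a black box and then substitute $c_j=\sum_k b_{j,k}a_k$, whereas the paper additionally treats the $b_{j,k}$ themselves as indeterminates and extracts their coefficient before re-summing; the resulting formula is the same.
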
 
\begin{proof} 
If we take the nested coefficients first of $a_k$ and then of $b_{j,k}$ for some $j,k \geq 1$ 
on both sides of the factorization cited above, we obtain that 
\[
\frac{q^j}{1-q^j} \cdot (q; q)_{\infty} = [b_{j,k}] \sum_{n \geq 1} s_{n,k}(b) \cdot q^n. 
\] 
Then if we take the coefficients of $q^n$ on each side of the previous equation we arrive at the 
identity 
\[
s_{n,j} = [b_{j,k}] s_{n,k}(b), 
\] 
for $j = 1,2,\ldots,n$. Finally, we multiply through both sides of the last equation by $b_{j,k}$ and then 
sum over all $j$ to conclude that the stated formula for $s_{n,k}(b)$ in the lemma is correct. 
Equivalently, since both sequences of $b_{n,k}$ and $s_{n,k}(b)$ are lower triangular, we could have 
deduced this identity by truncating the partial sums up to $n$ and employing a matrix argument to 
justify the formula above. 
\end{proof} 

\section{Expansions of other special factorization theorems} 
\label{Section_OtherFactThms}

\subsection{A factorization theorem for convolutions of Lambert series} 

In what follows we adopt the next notation for the Lambert series over a prescribed 
arithmetic function $h$ defined by 
\[
H_L(q) := \sum_{n \geq 1} \frac{h(n) q^n}{1-q^n}. 
\]
We seek a factorization theorem for the convolution of two Lambert series generating functions, 
$F_L(q)$ and $G_L(q)$, in the following form: 
\begin{align} 
\label{eqn_LSCvlFactThmExp} 
\frac{1}{q} \cdot F_L(q) G_L(q) & = \frac{1}{(q; q)_{\infty}} 
     \sum_{n \geq 1} \sum_{k=1}^n s_{n,k}(g) f_k \cdot q^n. 
\end{align} 

\begin{theorem}[Factorization Theorem for Convolutions] 
For the Lambert series factorization defined in \eqref{eqn_LSCvlFactThmExp}, we have the 
following exact expansions of the two matrix sequences characterizing the 
factorization where the difference of partition functions 
$s_{n,k} := [q^n] (q; q)_{\infty} q^k / (1-q^k)$: 
\begin{align*} 
s_{n,k}(g) & = \sum_{j=1}^{n+1} s_{j,k} \left(\sum_{d|n+1-j} g(d)\right) \\ 
s_{n,k}^{(-1)}(g) & = \sum_{d|n} [q^d]\left(\frac{q^{k+1}}{(q; q)_{\infty} G_L(q)}\right) \mu(n/d). 
\end{align*} 
\end{theorem}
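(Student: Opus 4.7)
The plan is to deduce both formulas directly from the defining factorization in \eqref{eqn_LSCvlFactThmExp}, following the template used in the Hadamard product case (Theorem \ref{theorem_HP_InvSeqs}) and in Lemma \ref{lemma_ARelatedFactResult}.

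For the formula for $s_{n,k}(g)$, I would first multiply \eqref{eqn_LSCvlFactThmExp} through by $(q;q)_{\infty}$ and then take the coefficient of $f_k$ on both sides. Since $[f_k] F_L(q) = q^k/(1-q^k)$ and since $G_L(q)$ does not depend on $f$, this isolates the generating function identity
\[
\sum_{n \geq 1} s_{n,k}(g) q^n = (q;q)_{\infty} \cdot \frac{q^{k-1} G_L(q)}{1-q^k},
\]
or equivalently $s_{n,k}(g) = [q^{n+1}]\bigl((q;q)_{\infty} q^k G_L(q)/(1-q^k)\bigr)$. Then I would use the two expansions $(q;q)_{\infty} q^k/(1-q^k) = \sum_{j \geq 1} s_{j,k} q^j$ (which is the very definition of $s_{j,k}$) and $G_L(q) = \sum_{m \geq 1} \widetilde{g}(m) q^m$ with $\widetilde{g}(m) := \sum_{d|m} g(d)$. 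The Cauchy product immediately produces the claimed convolution sum $\sum_{j=1}^{n+1} s_{j,k} \widetilde{g}(n+1-j)$, the $j = n+1$ summand being vacuously zero since $\widetilde{g}(0) = 0$.

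For the inverse formula I would mimic the trick used in the proof of Theorem \ref{theorem_HP_InvSeqs}: fix an integer $r \geq 1$ and substitute the particular sequence $f_k := s_{k,r}^{(-1)}(g)$ into \eqref{eqn_LSCvlFactThmExp}. The inner sum collapses by the orthogonality $\sum_{k=1}^n s_{n,k}(g) s_{k,r}^{(-1)}(g) = \delta_{n,r}$ and the right-hand side simplifies to $q^r/(q;q)_{\infty}$, whence
\[
F_L(q) = \frac{q^{r+1}}{(q;q)_{\infty} G_L(q)}.
\]
Reading off the coefficient of $q^n$ on the left using the standard identity $F_L(q) = \sum_{n \geq 1} \widetilde{f}(n) q^n$ with $\widetilde{f}(n) = \sum_{d|n} s_{d,r}^{(-1)}(g)$, I would then apply Möbius inversion to solve for $s_{n,r}^{(-1)}(g)$ and obtain exactly the stated formula.

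No serious obstacle is anticipated; both parts reduce to coefficient extraction paired with Möbius inversion. The only mild subtlety is to verify that \eqref{eqn_LSCvlFactThmExp} may be treated as a purely formal identity in the symbols $f_k$, so that the substitution $f_k := s_{k,r}^{(-1)}(g)$ in the inverse-formula argument is legitimate; this is immediate from the lower-triangularity of $s_{n,k}(g)$, and the same finite-truncation matrix argument remarked at the end of the proof of Lemma \ref{lemma_ARelatedFactResult} could be used to formalize it if desired.
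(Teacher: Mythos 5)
Your proposal is correct, and half of it coincides with the paper's argument: the derivation of $s_{n,k}^{(-1)}(g)$ by substituting $f_k := s_{k,r}^{(-1)}(g)$ into \eqref{eqn_LSCvlFactThmExp}, collapsing the right-hand side to $q^r/(q;q)_{\infty}$ via orthogonality, and then applying M\"obius inversion to $\sum_{d|n} s_{d,r}^{(-1)}(g) = [q^n]\, q^{r+1}/((q;q)_{\infty} G_L(q))$ is exactly what the paper does. Where you differ is in the first formula: you obtain $s_{n,k}(g)$ directly by extracting the coefficient of $f_k$ from \eqref{eqn_LSCvlFactThmExp}, which yields $\sum_{n} s_{n,k}(g) q^n = (q;q)_{\infty}\, q^{k-1} G_L(q)/(1-q^k)$ and then the stated convolution by a Cauchy product. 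The paper instead proves the inverse formula first and recovers $s_{n,k}(g)$ from it, by taking a divisor sum of the orthogonality relation $\Iverson{n=k} = \sum_j s_{n,j}^{(-1)}(g) s_{j,k}(g)$ and inverting the resulting generating-function identity. Your route for the first formula is shorter and more self-contained, since it never invokes the inverse sequence; the paper's route has the mild advantage of exhibiting the two formulas as generating-function reciprocals of one another. Both arguments tacitly require $g(1) \neq 0$ so that $G_L(q)/q$ is invertible as a formal power series (the paper is equally silent on this), and your closing remark about lower-triangularity legitimizing the formal substitution $f_k := s_{k,r}^{(-1)}(g)$ is the right way to tie up that loose end.
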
 
\begin{proof} 
We prove our second result along the same lines as the proof of 
Theorem \ref{theorem_HP_InvSeqs} above. 
Namely, we choose $F_L(q)$ to denote the Lambert series over 
$s_{n,k}^{(-1)}(g)$ for some fixed $k \geq 1$ and expand the right-hand-side of 
\eqref{eqn_LSCvlFactThmExp} as 
\begin{align*} 
[q^n] \frac{q^k}{(q; q)_{\infty} \cdot G_L(q) / q} & = \sum_{d|n} s_{d,k}^{(-1)}(g), 
\end{align*} 
which by M\"oebius inversion implies our stated result. 
Then to obtain a formula for the sequence $s_{n,k}(g)$ from the identity expanding the 
inverse sequence, we observe a property of the product of any two inverse matrices which is that 
\begin{align*} 
\Iverson{n=k} & = \sum_{j=1}^n s_{n,j}^{(-1)}(g) s_{j,k}(g). 
\end{align*} 
We then perform a divisor sum over $n$ in the previous equation to obtain that 
\begin{align*} 
\Iverson{k|n} & = \sum_{d|n} s_{d,k}^{(-1)}(g) \\ 
     & = 
     \sum_{j=1}^n [q^{n-j-1}] \frac{1}{(q; q)_{\infty} \cdot G_L(q)} \times s_{j,k}(g), 
\end{align*} 
which by another generating function argument implies that 
\begin{align*} 
s_{n,k}(g) & = [q^n] \left(\sum_{n \geq 1} \Iverson{k|n} q^n\right) (q; q)_{\infty} \cdot 
     \frac{G_L(q)}{q} \\ 
     & = 
     \frac{q^k}{1-q^k} (q; q)_{\infty} \cdot \frac{G_L(q)}{q} \\ 
     & = 
     \sum_{j=1}^{n+1} s_{j,k} \left(\sum_{d|n+1-j} g(d)\right), 
\end{align*} 
as claimed. 
\end{proof} 

We notice that the factorization in \eqref{eqn_LSCvlFactThmExp} together with the 
theorem imply that we have the two expansions of the following form: 
\begin{align*} 
[q^n] F_L(q) & = \sum_{k=1}^n \sum_{d|n} [q^d] \frac{q^{k+1}}{(q; q)_{\infty} G_L(q)} \times 
     \mu(n/d) \cdot [q^k] (q; q)_{\infty} F_L(q) G_L(q) \\ 
[q^n] G_L(q) & = \sum_{k=1}^n \sum_{d|n} [q^d] \frac{q^{k+1}}{(q; q)_{\infty} G_L(q)} \times 
     \mu(n/d) \cdot [q^k] (q; q)_{\infty} G_L(q)^2. 
\end{align*} 
The special case where $F_L(q) := G_L(q)$ in the last expansion provides a curious new 
relation between any Lambert series generating function $G_L(q)$, its reciprocal, and its square. 
This observation can be iterated to obtain even further multiple sum identities involving 
powers of $G_L(q)$. 

\subsection{A matrix-based proof of a factorization for sequences generated by Lambert series} 

As a last application of special cases of the Lambert series factorization theorems we have 
extended in this article, we consider another method of matrix-based proof which provides 
new formulas for the sequences generated by a Lambert series over $a_n$: $b(n) = (a \ast 1)(n)$. 
This approach is unique because unlike the factorization theorem variants we have 
explored so far which provide new identities and expansions for the sequence $a_n$ itself, the 
result in Theorem \ref{theorem_matrix_methods_omegan_genstmt} 
provides useful new inverse sequence expansions exclusively for the terms $b(n)$ whose 
ordinary generating function is the Lambert series generating function at hand 
\cite[\cf Thm.\ 1.4]{SCHMIDT-LSFACTTHM}. 

The first factorization theorem expanded in the introduction implicitly provides a 
matrix-multiplication-based representation of the coefficients $b(n)$ stated in terms of the 
matrix, $(T_{\Div}(i, j))_{n \times n} \equiv (T_{\Div})_n$, in the explicit forms of 
\begin{align*} 
(T_{\Div})_n \begin{bmatrix} a_1 \\ a_2 \\ \vdots \\ a_n \end{bmatrix} = \sum_{d|n} a_d 
     \quad\text{ and }\quad 
(T_{\Div})_n^{-1} \begin{bmatrix} a_1 \\ a_2 \\ \vdots \\ a_n \end{bmatrix} & = 
     \sum_{d|n} a_d \cdot \mu(n/d), 
\end{align*} 
where the corresponding inverse operation above is M\"obius inversion. 
For example, when $n := 6$ these matrices are given by 
\begin{align*} 
(T_{\Div})_6 = 
\begin{bmatrix} 
 1 & 0 & 0 & 0 & 0 & 0 \\
 1 & 1 & 0 & 0 & 0 & 0 \\
 1 & 0 & 1 & 0 & 0 & 0 \\
 1 & 1 & 0 & 1 & 0 & 0 \\
 1 & 0 & 0 & 0 & 1 & 0 \\
 1 & 1 & 1 & 0 & 0 & 1 \\
\end{bmatrix} 
     \quad\text{ and }\quad 
(T_{\Div})_6^{-1} = 
\begin{bmatrix} 
 1 & 0 & 0 & 0 & 0 & 0 \\
 -1 & 1 & 0 & 0 & 0 & 0 \\
 -1 & 0 & 1 & 0 & 0 & 0 \\
 0 & -1 & 0 & 1 & 0 & 0 \\
 -1 & 0 & 0 & 0 & 1 & 0 \\
 1 & -1 & -1 & 0 & 0 & 1 \\
\end{bmatrix} 
\end{align*} 
Operations with our new definitions of the matrices above allow us to prove the next result. 

\begin{theorem} 
\label{theorem_matrix_methods_omegan_genstmt} 
For all $n \geq 1$ and a fixed arithmetic function $a_n$ we have the identity 
\[
b(n) = \sum_{k=1}^n \sum_{j=1}^k s_{n,k} C_{k,j} a_j, 
\] 
where the inner matrix entries are given by \cite[A000041]{OEIS} 
\[
C_{n,k} = \sum_{d|n} \sum_{i=1}^d p(d-ik) \mu(n/d). 
\] 
\end{theorem}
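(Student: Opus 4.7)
The plan is to work entirely in the matrix representation sketched just above the theorem. Write $\vec{b} = (b(n))$ and $\vec{a} = (a_j)$, and recall from the original Lambert series factorization theorem restated in the introduction that
\[
(T_{\Div})_n = P_n \cdot S_n,
\]
where $S_n = (s_{n,k})_{1 \leq n,k \leq N}$ is the partition-difference matrix and $P_n = (p(n-k))_{1 \leq n,k \leq N}$ encodes multiplication by $1/(q;q)_{\infty}$. Since we want to factor $\vec{b} = T_{\Div}\,\vec{a}$ as $\vec{b} = S \cdot C \cdot \vec{a}$, we must take $C = S^{-1} T_{\Div}$, and substituting $S^{-1} = T_{\Div}^{-1} P$ (obtained by inverting both sides of $T_{\Div} = PS$) yields the closed form $C = T_{\Div}^{-1} \, P \, T_{\Div}$.

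The second step is to read off the entries of this triple product. First, $(T_{\Div}^{-1})_{n,d} = \mu(n/d)\cdot[d\mid n]_{\delta}$ by the M\"obius inversion identity already exhibited in the excerpt. Second, the matrix product $P T_{\Div}$ has $(d,j)$-entry
\[
(PT_{\Div})_{d,j} \;=\; \sum_{k\,:\, j\mid k}\, p(d-k) \;=\; \sum_{i=1}^{d} p(d-ij),
\]
which is the coefficient extraction identity $[q^{d}]\,\dfrac{q^{j}}{(1-q^{j})(q;q)_{\infty}} = \sum_{i=1}^{d} p(d-ij)$, valid because $\sum_{i\geq 1} q^{ij}/(q;q)_{\infty} = q^{j}/((1-q^{j})(q;q)_{\infty})$. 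Composing these two factors gives precisely
\[
C_{n,j} \;=\; (T_{\Div}^{-1}\, P\, T_{\Div})_{n,j} \;=\; \sum_{d\mid n} \mu(n/d) \sum_{i=1}^{d} p(d-ij),
\]
which matches the stated definition of $C_{n,k}$. Expanding $\vec b = S C \vec a$ componentwise then delivers the theorem.

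The only real subtlety (the ``main obstacle,'' mild as it is) is the generating-function bookkeeping in the middle step: one has to notice that the sum over $k$ multiples of $j$ repackages cleanly as a sum over $i$ via the identity for $q^{j}/(1-q^{j})(q;q)_{\infty}$, so that the resulting double sum has a single index collapsing into the $\mu$-inverted divisor sum. Everything else is triangular matrix algebra on finite-dimensional truncations, which is legitimate since $S$, $P$, and $T_{\Div}$ are all lower-triangular with nonzero diagonal entries, and we need only pass to the $N\times N$ truncations for $N \geq n$ to read off the $n$-th row of each product.
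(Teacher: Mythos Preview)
Your proof is correct and takes essentially the same approach as the paper: both arguments reduce to the matrix identity $C = T_{\Div}^{-1}\, P\, T_{\Div}$ (equivalently $S^{-1} = T_{\Div}^{-1} P$ from $T_{\Div} = PS$), then read off the entries using M\"obius inversion and the coefficient identity $[q^{d}]\, q^{j}/((1-q^{j})(q;q)_{\infty}) = \sum_{i} p(d-ij)$. Your presentation is in fact somewhat more streamlined than the paper's, which routes the same matrix algebra through the intermediate equivalent statement $\sum_{k} s_{n,k}^{(-1)} b(k) = \sum_{k} C_{n,k} a_k$ before multiplying back up.
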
 
\begin{proof} 
We first note that the theorem is equivalent to showing that we have a desired expansion of the form 
\[
\tag{i} 
\sum_{k=1}^n s_{n,k}^{(-1)} b(k) = \sum_{k=1}^n C_{n,k} a_k
\] 
The right-hand-side of (i) is equivalent to the expansion of the last row in the 
matrix-vector product 
\[
(C_{i,j})_{n \times n} \begin{bmatrix} a_1 \\ a_2 \\ \vdots \\ a_n \end{bmatrix} = 
     (T_{\Div})_n^{-1} \left([q^i] \frac{q^j}{1-q^j} \frac{1}{(q; q)_{\infty}}\right)_{n \times n} 
     \begin{bmatrix} a_1 \\ a_2 \\ \vdots \\ a_n \end{bmatrix}, 
\] 
and that the left-hand-side of (i) is expanded by 
\[
(s_{n,k}^{(-1)})_{n \times n} (T_{\Div})_n \begin{bmatrix} a_1 \\ a_2 \\ \vdots \\ a_n \end{bmatrix} = 
     (T_{\Div})_n^{-1} \left(p(i-j)\right)_{n \times n} (T_{\Div})_n 
     \begin{bmatrix} a_1 \\ a_2 \\ \vdots \\ a_n \end{bmatrix}. 
\] 
Then we have that the sequence $b(n)$ is expanded by multiplying the left-hand-side of (i) by the 
matrix $(p(i-j))_{n \times n}^{-1} (T_{\Div})_n$ where 
\begin{align*} 
     (p(i-j))_{n \times n}^{-1} \left([q^i] \frac{q^j}{1-q^j} \frac{1}{(q; q)_{\infty}}\right)_{n \times n} 
     \begin{bmatrix} a_1 \\ a_2 \\ \vdots \\ a_n \end{bmatrix} & = 
     \left([q^n] \frac{q^k}{1-q^k}\right) 
     \begin{bmatrix} a_1 \\ a_2 \\ \vdots \\ a_n \end{bmatrix} \\ 
     & \longmapsto 
     [q^n] \sum_{k=1}^n \frac{q^k}{1-q^k} \cdot a_k = b(n) 
     \qedhere 
\end{align*} 
\end{proof} 

\begin{cor}[An Exact Formula for a Prime Counting Function] 
For all $n \geq 1$, we have the exact formula for the function $\omega(n)$ which 
counts the number of distinct primes dividing $n$ given by \cite[A001221]{OEIS} 
\begin{align*} 
\omega(n) & = \log_2\left[\sum_{k=1}^n \sum_{j=1}^k \left(\sum_{d|k} \sum_{i=1}^d 
     p(d-ji)\right) s_{n,k} \cdot |\mu(j)|\right], 
\end{align*} 
where $s_{n,k} = s_o(n, k) - s_e(n, k)$ denotes the difference of the number of $k$'s in all 
partitions of $n$ into an odd (even) number of distinct parts as in 
Section \ref{Section_Intro}. 
\end{cor}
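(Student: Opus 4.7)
The plan is to deduce this corollary as a direct specialization of Theorem~\ref{theorem_matrix_methods_omegan_genstmt} applied to the squarefree indicator $a_n := |\mu(n)|$. The key intermediate fact I would invoke first is the classical multiplicative identity
\[
(|\mu| \ast 1)(n) = \sum_{d \mid n} |\mu(d)| = 2^{\omega(n)},
\]
which follows because $|\mu|$ is multiplicative with $|\mu(p^k)| = 1$ for $k \in \{0,1\}$ and vanishes for $k \geq 2$, so the divisor sum over $n = \prod_i p_i^{e_i}$ factors as $\prod_{p \mid n} 2 = 2^{\omega(n)}$.

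With this divisor-sum representation established, I would set $a_n := |\mu(n)|$ and therefore $b(n) = 2^{\omega(n)}$ in the conclusion of Theorem~\ref{theorem_matrix_methods_omegan_genstmt}, obtaining
\[
2^{\omega(n)} = \sum_{k=1}^{n} \sum_{j=1}^{k} s_{n,k}\, C_{k,j}\, |\mu(j)|,
\]
where the matrix entries $C_{k,j}$ are given explicitly by the double divisor sum over partition-function values stated in the theorem. Taking $\log_2$ of both sides and unfolding $C_{k,j}$ inside the brackets then produces the formula displayed in the corollary.

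The only genuine obstacle here is conceptual rather than computational. The function $\omega(n)$ does not itself admit a natural Lambert-series divisor-sum expansion, so on the face of it the factorization machinery of the preceding section does not apply to it directly. The trick is to recognize that the \emph{exponential} $2^{\omega(n)}$ is a divisor sum, namely over $|\mu|$, which is precisely the format $b(n) = (a \ast 1)(n)$ demanded as input by Theorem~\ref{theorem_matrix_methods_omegan_genstmt}. Once that reformulation has been spotted, the proof reduces to one substitution followed by an application of $\log_2$, with no further partition-theoretic identities needed beyond those already packaged into the matrix entries $s_{n,k}$ and $C_{k,j}$.
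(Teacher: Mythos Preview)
Your proposal is correct and follows exactly the same approach as the paper's own proof, which consists of the single sentence ``We select the special case of $(a, b) := (|\mu|, 2^{\omega})$ to arrive at the statement in the corollary.'' You have simply made explicit the classical identity $\sum_{d\mid n}|\mu(d)| = 2^{\omega(n)}$ and the final $\log_2$ step that the paper leaves implicit.
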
 
\begin{proof} 
We select the special case of $(a, b) := (|\mu|, 2^{\omega})$ to arrive at the 
statement in the corollary. 
\end{proof} 

\subsection*{Acknowledgments} 

The authors thank the referees for their helpful insights and comments on 
preparing the manuscript. The author also thanks Mircea Merca for suggestions on the 
article and for collaboration on the prior articles on Lambert series factorization 
theorems.

\end{document}